\newtheorem{lemma}{Лемма} 
\newtheorem{theorem}{Теорема} 
\theoremstyle{definition} 
\newtheorem{example}{Пример}
\def\udcs{517.9} 
\begin{document}

УДК \udcs
 \thispagestyle{empty}

\title{Об успокоении системы управления на графе-звезде с глобальным запаздыванием, пропорциональным времени} 

\author{А.П. Леднов}
\address{Александр Петрович Леднов, 
\newline\hphantom{iii} Саратовский национальный исследовательский государственный университет имени Н.Г. Чернышевского,
\newline\hphantom{iii} ул. Астраханская, 83, 
\newline\hphantom{iii} 410012, г. Саратов, Россия
\smallskip
\newline\hphantom{iii} Московский центр фундаментальной и прикладной математики,
\smallskip
\newline\hphantom{iii} Московский государственный университет имени М.В. Ломоносова,
\newline\hphantom{iii} ул. Ленинские Горы, 1,
\newline\hphantom{iii} 119991, г. Москва, Россия
}
\email{lednovalexsandr@gmail.com}

\thanks{\sc Lednov A.P. 
On damping a control system on a star graph with global time--proportional delay}
\thanks{\copyright \ Леднов А.П. 2024}
\thanks{\rm Исследование выполнено за счет гранта Российского научного фонда № 24-71-10003, https://rscf.ru/project/24-71-10003/, в МГУ им. М. В. Ломоносова.}

\maketitle 
{
\small
\begin{quote}
\noindent{\bf Аннотация. } 
Рассматривается задача об успокоении системы управления с запаздыванием, описываемой функционально-дифференциальными уравнениями первого порядка на временн\'ом графе-звезде. Запаздывание в системе пропорционально времени и распространяется через внутреннюю вершину. Исследуется вариационная задача о минимизации функционала энергии с учетом вероятностей сценариев, соответствующих различным ребрам. Установлено, что оптимальная траектория удовлетворяет условиям типа Кирхгофа во внутренней вершине. Доказана эквивалентность вариационной задачи некоторой краевой задаче для функционально-дифференциальных уравнений второго порядка на графе, и установлена однозначная разрешимость обеих задач.
\medskip

\noindent{\bf Ключевые слова:}{ уравнение пантографа, функционально-дифференциальное уравнение, квантовый граф, глобальное запаздывание, временн\'ой граф, вариационная задача, оптимальное управление.}

\medskip

\begin{center}
    \bfseries\normalsize On damping a control system on a star graph with global time-proportional delay
\end{center}

\noindent{\bf Abstract.}  We consider the problem of damping a control system with delay, described by first-order functional-differential equations on a temporal star graph. The delay in the system is time-proportional and propagates through the internal vertex. We study the variational problem of minimizing the energy functional, taking into account the probabilities the of scenarios corresponding to different edges. It is established that the optimal trajectory satisfies Kirchhoff-type conditions at the internal vertex. The equivalence of the variational problem to a certain boundary value problem for second-order functional-differential equations on the graph is proved, and the unique solvability of both problems is established.

\medskip

\noindent{\bf Keywords:} Pantograph equation, functional-differential equation, quantum graph, global delay, temporal graph, variational problem, optimal control

\medskip

\noindent{\bf Mathematics Subject Classification: }{93C23, 49J55, 34K35, 34K10} 
 
\end{quote}
}

\section{Введение}

Н.Н. Красовским \cite{1} была поставлена и исследована задача об успокоении управляемой системы с последействием. Рассматривалась система с постоянным запаздыванием, описываемая уравнением запаздывающего типа. Более сложный случай, когда уравнение имеет нейтральный тип, т.е. содержит запаздывание и в главных членах, был рассмотрен А.Л. Скубачевским \cite{2,14}  и позже другими авторами (см. также \cite{-12} и литературу там). Недавно С.А. Бутерин перенес данную задачу на так называемые временные графы \cite{3,4,12,13}.

Дифференциальные операторы на графах, часто называемые квантовыми графами, активно изучаются с прошлого века в связи с моделированием различных процессов, протекающих в сложных системах, представимых в виде пространственных сетей \cite{7,8,9,10,-10}. Для таких моделей характерны условия типа Кирхгофа во внутренних вершинах. В \cite{3,4,12,13} показано, что аналогичным условиям  удовлетворяют оптимальные траектории на временн\'ых графах. 

В \cite{5} было предложено определение операторов на графах с глобальным запаздыванием. Последнее означает, что запаздывание распространяется через внутренние вершины графа. Другими словами,  решение уравнения на входящем ребре служит начальной функцией для уравнений на исходящих ребрах. Глобальное запаздывание стало альтернативой локально нелокальному случаю рассмотренному в \cite{11}, когда уравнение на каждом ребре имеет свой собственный параметр запаздывания и может быть решено отдельно от уравнений на остальных ребрах.

Использование концепции глобального запаздывания позволило перенести на графы вышеупомянутую задачу об успокоении системы управления с последействием в \cite{3,4}. Это, в свою очередь, привело к концепции временн\'ого графа, ребра которого, в отличие от пространственной сети, отождествляются с промежутками времени, а каждая внутренняя вершина понимается как точка разветвления процесса, дающая несколько различных сценариев дальнейшего его протекания. Кроме того, в \cite{3,12} была предложена стохастическая интерпретация системы управления на временн\'ом дереве. А именно, к системе на дереве приведет, например, замена коэффициентов в уравнении на интервале дискретными случайными процессами с дискретным временем. В работе \cite{13} исследовалась глобально нелокальная интегро-дифференциальная система управления на графе-звезде.

В настоящей работе на временн\'ой граф типа звезды распространяется задача об успокоении системы управления, описываемой так называемым уравнением пантографа, которое имеет ряд важных приложений \cite{15,16,17,18,19,20,6}. В данном случае запаздывание не постоянно, а является пропорциональным времени сжатием.

На интервале этот случай был рассмотрен Л.Е. Россовским в \cite{6} для уравнения нейтрального типа:
\begin{equation}
y^{\prime}\left(t\right)+a y^{\prime}\left(q^{-1} t\right)+b y\left(t\right)+c y\left(q^{-1} t\right)=u\left(t\right), \quad t>0, \label{1}
\end{equation}
где $a, b, c \in \mathbb{R}$ и $q>1$ , а $u\left(t\right)$ -- управляющее воздействие, которое является вещественнозначной функцией. Состояние системы в начальный момент времени задается условием
\begin{equation}y\left(0\right)=y_0 \in \mathbb{R}. \label{2}\end{equation}
Задача управления формулируется следующим образом. Требуется найти $u(t)\in L_2(0,T)$, приводящее систему (\ref{1}), (\ref{2}) в равновесие $y(t)=0$ при $t \geqslant T$ для некоторого $T>0$.

Для этого достаточно найти $u(t)$, приводящее систему в состояние 
\begin{equation}y\left(t\right)=0, \quad q^{-1}T \leqslant t \leqslant T, \label{3}\end{equation}
а затем сбросить управление, положив $u\left(t\right) \equiv 0$ при $t>T$.  При этом из всех возможных управлений ищется управление, обладающее минимальной энергией
$$ \displaystyle\int_0^{T} u^2\left(t\right) d t.$$

В результате получается задача минимизации квадратичного функционала
\begin{equation}\mathcal{J}\left(y\right)=\displaystyle\int_0^{T}\left(y^{\prime}\left(t\right)+a y^{\prime}\left(q^{-1} t\right) +b y\left(t\right)+c y\left(q^{-1} t\right)\right)^2 d t \longrightarrow \min \label{4}\end{equation}
на множестве функций $y\left(t\right)\in W^1_2[0,T]$, удовлетворяющих краевым условиям  (\ref{2}), (\ref{3}). 

Решение задачи (\ref{2})--(\ref{4}) получено в \cite{6}. Оно сведено к решению эквивалентной краевой задачи для функционально--дифференциального уравнения второго порядка. В частности, установлено, что если функция  $y\left(t\right)\in W^1_2[0,T]$  удовлетворяет (\ref{2}), (\ref{3}) и минимизирует функционал (\ref{4}), то для всех $v\in W_2^1[0,T]$, удовлетворяющих (\ref{2}) при $y_0=0$ и (\ref{3}), выполняется интегральное тождество
\begin{multline*}
\int_0^{q^{-1}T}\left(\left(1+a^2 q\right) y^{\prime}(t)+a y^{\prime}\left(q^{-1} t\right)+a q y^{\prime}(q t)\right) v^{\prime}(t) d t+ \\
+\int_0^{q^{-1}T}\left(\left(a b-c q^{-1}\right) y^{\prime}\left(q^{-1} t\right)+\left(c q-a b q^2\right) y^{\prime}(q t)+\right.\\
\left.+\left(b^2+c^2 q\right) y(t)+b c y\left(q^{-1} t\right)+b c q y(q t)\right) v(t) d t=0.
\end{multline*}
Последнее означает, что $y\left(t\right)\in W^1_2[0,T]$ определяет обобщенное решение краевой задачи 
\begin{multline}
-\left(\left(1+a^2 q\right) y^{\prime}(t)+a y^{\prime}\left(q^{-1} t\right)+a q y^{\prime}(q t)\right)^{\prime}+\left(a b-c q^{-1}\right) y^{\prime}\left(q^{-1} t\right)+ \\
+\left(c q-a b q^2\right) y^{\prime}(q t)+\left(b^2+c^2 q\right) y(t)+b c y\left(q^{-1} t\right)+b c q y(q t)=0, \quad 0<t<q^{-1}T, 
\label{-1}
\end{multline}
при условиях (\ref{2}) и (\ref{3}). Обратное также верно: если $y\left(t\right)\in W^1_2[0,T]$ является обобщенным решением задачи (\ref{2}), (\ref{3}), (\ref{-1}), то $y$ доставляет минимум функционалу (\ref{4}).

Следующая теорема устанавливает существование и единственность обобщенного решения краевой задачи (\ref{2}), (\ref{3}), (\ref{-1}), а также однозначную разрешимость вариационной задачи (\ref{2})--(\ref{4}).

\begin{theorem}[\cite{6}]\label{Th-2} Пусть $|a| \neq q^{-1 / 2}$. Тогда задача (\ref{2}), (\ref{3}) (\ref{-1}), имеет единственное обобщенное решение $y \in W_2^1[0,T]$. \end{theorem}

В следующем разделе дается постановка вариационной задачи на графе-звезде. В частности, это потребовало понимание того, как должно выглядеть на графе глобальное сжатие. Для простоты мы ограничиваемся случаем $a=0$, т.е. уравнением запаздывающего типа. В третьем разделе будет установлена эквивалентность вариационной задачи некоторой краевой задаче для функционально-дифференциальных уравнений второго порядка на графе. В последнем разделе доказывается однозначная разрешимость обеих задач.

\section{Постановка вариационной задачи на графе звезде} 

Рассмотрим граф $\Gamma_m$, изображенный на рисунке \ref{fig1}. Как обычно, под функцией $y$ на графе $\Gamma_m$ будем понимать кортеж $y=[y_1,\dots,y_m]$, в котором компонента $y_j$ определена на ребре $e_j$, т.е. $y_j=y_j\left(t\right)$,  $t\in[0,T_j]$.

\begin{figure}
\begin{center}
\includegraphics[scale=0.3]{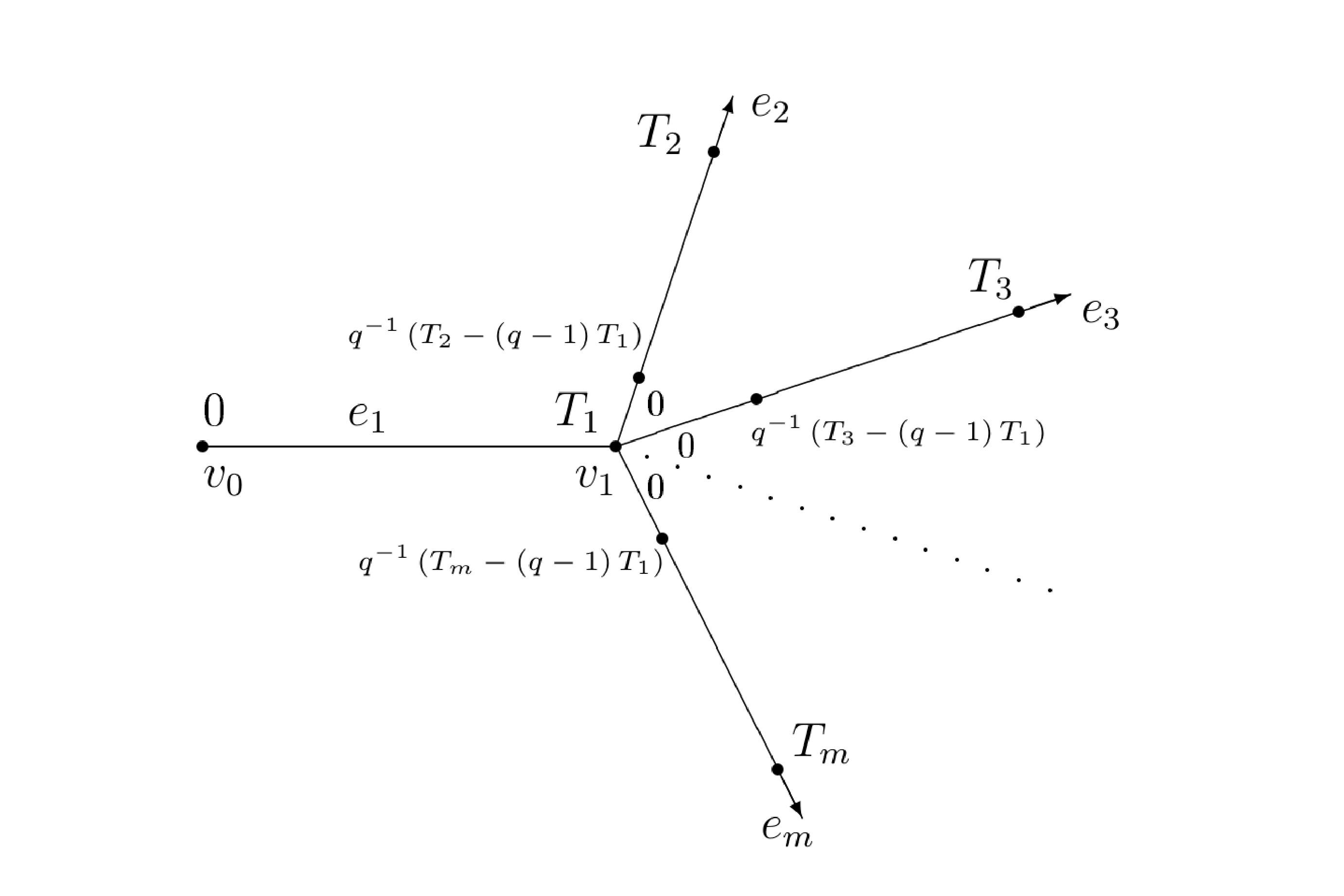}
\end{center}
\caption{\small{Граф типа звезды $\Gamma_m$}}
\label{fig1}
\end{figure}

Пусть до момента времени $t=T_1>0,$ ассоциированного с единственной внутренней вершиной $v_1$ графа $\Gamma_m$, наша система управления с запаздыванием пропорциональным времени на $\Gamma_m$ описывается уравнением
\begin{equation}
\ell_1 y\left(t\right):= y_1^\prime\left(t\right) + b_1y_1\left(t\right) + c_1y_1\left(q^{-1}t\right) = u_1\left(t\right), \quad 0<t<T_1,
\label{11}
\end{equation}
заданным на ребре $e_1$ графа $\Gamma_m$, с начальным условием 
\begin{equation}
y_1\left(0\right)=y_0.
\label{12}
\end{equation}
При $t=T_1$, т.е. в вершине $v_1$, система разветвляется на $m-1$ независимых параллельных процессов, описываемых уравнениями 
\begin{equation}
\ell_j y\left(t\right):= y_j^\prime\left(t\right) + b_jy_j\left(t\right) + c_jy_j\left(q^{-1}\left(t - \left(q-1\right)T_1\right)\right) =u_j\left(t\right),\quad t>0,\quad  j=\overline{2,m}, 
\label{13}
\end{equation}
но имеющих общую историю, определяемую уравнением (\ref{11}) с начальным условием (\ref{12}) и условиями
\begin{equation}
y_j\left(t\right)=y_1\left(t+T_1\right), \quad \left(q^{-1}-1\right)T_1 < t < 0, \quad j=\overline{2,m},
\label{14}
\end{equation}
а также условиями непрерывности в вершине $v_1$, которые в данном случае согласуются с (\ref{14}) при $t\to-0$: 
\begin{equation}
y_j\left(0\right)=y_1\left(T_1\right), \quad  j=\overline{2,m}.
\label{15}
\end{equation}
Как и в предыдущем разделе, мы предполагаем, что $q>1$, $y_0\in\mathbb{R}$ и все $b_j$, $c_j\in\mathbb{R}$.

В (\ref{13}) $j$-ое уравнение задано на ребре $e_j$ графа $\Gamma_m$, представляющем собой, вообще говоря бесконечный луч, выходящий из вершины $v_1$. Условия (\ref{14}) означают, что запаздывание распространяется через вершину $v_1$.

Задача Коши (\ref{11})–-(\ref{15}) имеет единственное решение $y_j(t) \in W_2^1[0, T_j]$, $j = \overline{1,m}$, для любого фиксированного $T_j > 0 $, $j = \overline{2,m}$, при условии, что $u_j(t) \in L_2(0, T_j)$ для $j = \overline{1,m}$. 
На ребре $e_1$ она сводится к уравнению Вольтерра  второго рода с кусочно--постоянным ядром и свободным членом из $W_2^1[0,T_1]$:
$$y_1(t)=y_0+\displaystyle\int_0^{t} u_1(s)ds-b_1\int_0^t y_1(s) ds -qc_1\displaystyle\int_0^{q^{-1}t}y_1(s) ds=f(t)+\displaystyle\int_0^{t}K(t,s)y_1(s)ds,$$
где $$f(t)=y_0+\displaystyle\int_0^{t} u_1(s)ds,$$
$$K(t, s) = 
\begin{cases}
- (b_1 +  qc_1), &  s \leq q^{-1}t, \\
- b_1, & s > q^{-1}t.
\end{cases}$$ 
Тогда полученная функция $y_1(t)$ дает общую начальную функцию (\ref{14}) и общее начальное условие (\ref{15}) для всех уравнений в (\ref{13}), которые могут быть решены аналогично.

\begin{example}
Пусть $m = 2$, $b := b_1 = b_2$, $c := c_1 = c_2$, и  
$$
y(t) := 
\begin{cases} 
y_1(t), & 0 \leq t \leq T_1, \\
y_2(t - T_1), & t > T_1, 
\end{cases}
\quad
u(t) := 
\begin{cases} 
u_1(t), & 0 < t < T_1, \\
u_2(t - T_1), & t > T_1.
\end{cases}
$$
Тогда задача Коши (\ref{11})–-(\ref{15}) принимает вид (\ref{1}), (\ref{2}) при $a = 0$.
\end{example}

Предположим для определенности, что $T_j>\left(q-1\right)T_1$ при всех $j=\overline{2,m}$. Для успокоения системы (\ref{11})–-(\ref{15}) сразу при всех сценариях  нужно привести ее в состояние
\begin{equation}
y_j\left(t\right)=0, \quad q^{-1}\left(T_j-\left(q-1\right)T_1\right)\leqslant t\leqslant T_j, \quad j=\overline{2,m},
\label{16}
\end{equation}
выбрав подходящие управления  $u_j\left(t\right)$, $j=\overline{1,m}$. Тогда, положив $u_j\left(t\right)\equiv0$ при $t>T_j$, $j=\overline{2,m}$, будем иметь $y_j(t)=0$ при тех же $t$ и $j$. Другими словами, система будет приведена в равновесие на каждом исходящем ребре. Поскольку такие $u_j\left(t\right)$ не единственны, будем искать их, минимизируя усилия $\|u_j\|^2_{L_2\left(0,T_j\right)}$.  Кроме того, аналогично тому, как это было сделано в \cite{3} для случая постоянного запаздывания, мы можем регулировать степень участия каждого $\|u_j\|^2_{L_2\left(0,T_j\right)}$ в соответствующем функционале энергии, выбирая определенный положительный вес $\alpha_j$.

Таким образом, приходим к вариационной задаче 
\begin{equation}
\mathcal{J}\left(y\right)=\sum_{j=1}^m\alpha_j\displaystyle\int_0^{T_j}\left(\ell_j y\left(t\right)\right)^2\,dt\to\min
\label{17}
\end{equation}
при условиях (\ref{12}), (\ref{14})--(\ref{16}), где $\alpha_j>0$, $j=\overline{1,m}$, фиксированы.

Для выбора весов $\alpha_j$, $j=\overline{1,m}$, можно применить вероятностный подход в соответствии с  интерпретацией системы управления на временн\'ом графе, предложенной в \cite{3}. А именно, для этого нужно положить $\alpha_1=1$, а в качестве $\alpha_j$, $j=\overline{2,m}$, взять вероятности сценариев, задаваемых соответствующими уравнениями в (\ref{13}), т.е. $\alpha_2+\ldots+\alpha_m=1$. Последнее тождество также обеспечивает соответствие случаю интервала, если уравнения в (\ref{13}) не зависят от $j$, т.е. являются искусственными копиями одного и того же единственно возможного сценария (см. пример 2 в \cite{3}).

Заметим, что условия (\ref{14}) никаких ограничений на функцию $y=[y_1,\dots,y_m]$ не накладывают. Поэтому условимся, что взятие $\mathcal{J}\left(y\right)$, равно как и $\ell_j y$ при $j=\overline{2,m}$, от какой бы то ни было функции $y$ на $\Gamma_m$ автоматически подразумевает применение условий (\ref{14}). Для краткости также введем обозначение $\ell y:=\left[\ell_1y,\dots,\ell_m\right]$.

\section{Сведение к краевой задаче}

Рассмотрим вещественное гильбертово пространство $W_2^k\left(\Gamma_m\right)=\bigoplus_{j=1}^m W_2^k[0,T_j]$ со скалярным произведением $$\left(y,z\right)_{W_2^k\left(\Gamma_m\right)}=\sum_{j=1}^m\left(y_j,z_j\right)_{W_2^k[0,T_j]},$$ где $y=[y_1,\dots,y_m]$, $z=[z_1,\dots,z_m]$, $\left(f,g\right)_{W_2^k[a,b]}=\sum_{\nu=0}^{k} \left(f^{\left(\nu\right)},g^{\left(\nu\right)}\right)_{L_2\left(a,b\right)}$ -- скалярное произведение в $W_2^k [a,b]$, а $\left(\cdot,\cdot\right)_{L_2\left(a,b\right)}$ -- скалярное произведение в $L_2\left(a,b\right)$.

Обозначим через $\mathcal{W}$ замкнутое подпространство $W_2^1\left(\Gamma_m\right)$, состоящее из всех наборов $[y_1,\dots,y_m]$, удовлетворяющих условиями (\ref{15}), (\ref{16}) и $y_1\left(0\right)=0$.

Также введем пространство $W_2^k\left(\widetilde{\Gamma}_m\right)=W_2^k[0,T_1]\oplus\displaystyle\bigoplus_{j=2}^m W_2^k[0,q^{-1}\left(T_j-\left(q-1\right)T_1\right)]$.

\begin{lemma}\label{L1}Если $y\in W_2^1\left(\Gamma_m\right)$ является решением вариационной задачи (\ref{12}), (\ref{14})--(\ref{17}), то 
\begin{equation}
B\left(y, w\right):=\sum_{j=1}^m \alpha_j\displaystyle\int_0^{T_j} \ell_j y\left(t\right)\ell_j  w\left(t\right) dt =0 \quad \forall w\in\mathcal{W}.
\label{18}
\end{equation}
Обратно, если для некоторого $y\in W_2^1\left(\Gamma_m\right)$ выполняется (\ref{12}), (\ref{15}), (\ref{16}) и (\ref{18}), то $y$ является решением задачи (\ref{12}), (\ref{14})--(\ref{17}).
\end{lemma}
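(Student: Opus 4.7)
My strategy is the classical one for quadratic variational problems: view $\mathcal{J}$ as the quadratic form generated by the symmetric bilinear form $B(\cdot,\cdot)$ in (\ref{18}), observe that the admissible set defined by (\ref{12}), (\ref{14})--(\ref{16}) is an affine subspace of $W_2^1(\Gamma_m)$ whose associated direction space is precisely $\mathcal{W}$, and run the standard first-variation argument. Linearity of each $\ell_j$ yields the key identity
$$
\mathcal{J}(y+\varepsilon w) \;=\; \mathcal{J}(y) + 2\varepsilon B(y,w) + \varepsilon^2 B(w,w), \qquad \varepsilon\in\mathbb{R},\ w\in W_2^1(\Gamma_m),
$$
on which both halves of the lemma rest.

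\textbf{Necessity.} Let $y$ be a minimizer and take any $w\in\mathcal{W}$. First I would check that $y+\varepsilon w$ remains admissible for every $\varepsilon\in\mathbb{R}$: $w_1(0)=0$ preserves (\ref{12}); the matching $w_j(0)=w_1(T_1)$ built into $\mathcal{W}$ preserves (\ref{15}) and, combined with the delay extension (\ref{14}) used in defining $\ell_j$ on the initial layer $[0,(q-1)T_1]$, keeps (\ref{14}) consistent after perturbation; and $w_j\equiv 0$ on the interval from (\ref{16}) preserves (\ref{16}). Minimality then forces the scalar quadratic $\varepsilon\mapsto\mathcal{J}(y+\varepsilon w)$ to attain its minimum at $\varepsilon=0$, so its linear coefficient $2B(y,w)$ must vanish, which is exactly (\ref{18}).

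\textbf{Sufficiency.} Conversely, suppose $y\in W_2^1(\Gamma_m)$ satisfies (\ref{12}), (\ref{15}), (\ref{16}) and (\ref{18}), and let $z$ be any admissible competitor. Setting $w:=z-y$ I get $w_1(0)=y_0-y_0=0$, $w_j(0)=z_1(T_1)-y_1(T_1)=w_1(T_1)$, and $w_j\equiv 0$ on the tail interval from (\ref{16}), so $w\in\mathcal{W}$. Applying the key identity at $\varepsilon=1$ and using $B(y,w)=0$ from (\ref{18}),
$$
\mathcal{J}(z) \;=\; \mathcal{J}(y) + 2B(y,w) + B(w,w) \;=\; \mathcal{J}(y) + B(w,w) \;\geq\; \mathcal{J}(y),
$$
since $B(w,w)=\sum_{j=1}^{m}\alpha_j\,\|\ell_j w\|_{L_2(0,T_j)}^2\geq 0$. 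Hence $y$ minimizes $\mathcal{J}$ on the admissible set.

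\textbf{Where the difficulty lies.} Mathematically there is no deep obstacle; the whole proof is the characterization of the minimum of a nonnegative quadratic form on an affine subspace by vanishing of the first variation. The only delicate point is bookkeeping with the delay extension (\ref{14}): on the initial layer $[0,(q-1)T_1]$ the argument of the delay term in $\ell_j w$ is negative, and the values $w_j(s)$ for $s\in[(q^{-1}-1)T_1,0)$ are implicitly defined through $w_j(s)=w_1(s+T_1)$, i.e.\ by $w_1$ alone. One must verify that the matching condition $w_j(0)=w_1(T_1)$ imposed in $\mathcal{W}$ is exactly what is needed for this extension to be consistent with the value $w_j(0)$ inherited from the factor $W_2^1[0,T_j]$, thereby ensuring that $\ell_j$ is well defined on $\mathcal{W}$ and that (\ref{14}) need not be reimposed in the second half of the statement.
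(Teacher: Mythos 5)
Your proof is correct and follows essentially the same route as the paper: expand $\mathcal{J}(y+sw)=\mathcal{J}(y)+2sB(y,w)+s^2\mathcal{J}(w)$, use minimality over all $s$ to kill the linear term for necessity, and compare $\mathcal{J}(y+w)\geqslant\mathcal{J}(y)$ using $B(y,w)=0$ and $\mathcal{J}(w)\geqslant 0$ for sufficiency, noting that every admissible competitor differs from $y$ by an element of $\mathcal{W}$. Your closing remark about (\ref{14}) being a convention for interpreting the delayed argument rather than an extra constraint matches the paper's treatment as well.
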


\begin{proof}
Пусть $y\in W_2^1\left(\Gamma_m\right)$ -- решение задачи (\ref{12}), (\ref{14})--(\ref{17}). Тогда для произвольной фиксированной функции $ w\in\mathcal{W}$ сумма $y+s w$ принадлежит $W_2^1\left(\Gamma_m\right)$ при любом $s\in\mathbb{R}$ и удовлетворяет условиям  (\ref{12}), (\ref{15}), (\ref{16}). Нетрудно видеть, что $$\mathcal{J}\left(y+s w\right) = \mathcal{J}\left(y\right)+2sB\left(y, w\right)+s^2\mathcal{J}\left( w\right).$$ Так как  $\mathcal{J}\left(y+s w\right)\geqslant\mathcal{J}\left(y\right)$ для всех $s\in\mathbb{R}$, то выполняется (\ref{18}).

Обратно, пусть  $y\in W_2^1\left(\Gamma_m\right)$ удовлетворяет условиям (\ref{12}), (\ref{15}) и (\ref{16}). Тогда (\ref{18}) влечет
$$\mathcal{J}\left(y+ w\right)=\mathcal{J}\left(y\right)+2B\left(y, w\right)+\mathcal{J}\left( w\right)\geqslant\mathcal{J}\left(y\right) \quad \forall  w\in\mathcal{W}.$$
Таким образом, $y$ доставляет минимум функционалу (\ref{17}) при условиях (\ref{12}), (\ref{15}) и (\ref{16}). 
\end{proof}

Преобразуем (\ref{18}), сделав замену переменных в членах, содержащих $w_j\left(q^{-1}\left(t-\left(q-1\right)T_1\right)\right)$ и $w_1\left(q^{-1}t\right)$. 
В результате выражение для $B\left(y, w\right)$ примет вид
\begin{multline*}
B\left(y, w\right) = \displaystyle\sum_{j=1}^m\alpha_j\displaystyle\int_0^{T_j}\ell_jy\left(t\right) w_j^\prime\left(t\right)dt + \displaystyle\sum_{j=1}^m \alpha_j b_j\displaystyle\int_0^{T_j}\ell_jy\left(t\right) w_j\left(t\right)dt+
\\
+ q\left(\alpha_1 c_1\displaystyle\int_0^{q^{-1}T_1}\ell_1y\left(qt\right) w_1\left(t\right)dt + \displaystyle\sum_{j=2}^m\alpha_j c_j \displaystyle\int_{\left(q^{-1}-1\right)T_1}^{q^{-1}\left(T_j-\left(q-1\right)T_1\right)} \ell_j y\left(qt+\left(q-1\right)T_1\right) w_j\left(t\right)dt \right).
\end{multline*}
Применяя (\ref{14}) к $ w=\left[ w_1,\dots, w_m\right]\in\mathcal{W}$, можно представить
\begin{multline*}\displaystyle\int_{\left(q^{-1}-1\right)T_1}^{q^{-1}\left(T_j-\left(q-1\right)T_1\right)} \ell_j y\left(qt+\left(q-1\right)T_1\right) w_j\left(t\right)dt =
\\ 
=\displaystyle\int_{0}^{q^{-1}\left(T_j-\left(q-1\right)T_1\right)} \ell_j y\left(qt+\left(q-1\right)T_1\right) w_j\left(t\right)dt + 
\displaystyle\int_{q^{-1}T_1}^{T_1} \ell_j y\left(qt-T_1\right) w_1\left(t\right)dt, \quad j=\overline{2,m}.\end{multline*}
Тогда перепишем (\ref{18}) в эквивалентном виде
\begin{multline}
B\left(y, w\right)=\displaystyle\sum_{j=1}^m\left(\alpha_j\displaystyle\int_0^{T_j}\ell_j y\left(t\right) w_j^\prime\left(t\right)dt+\right.
\\
+\left.\displaystyle\int_0^{T_j}\left(\alpha_jb_j\ell_jy\left(t\right)+\widetilde{\ell_j}y\left(t\right)\right) w_j\left(t\right)dt\right)=0\quad\forall w\in\mathcal{W},
\label{19}
\end{multline}
где
\begin{equation}
\begin{aligned}
\tilde{\ell}_1 y\left(t\right) &= \left\{\begin{array}{cc} q \alpha_1c_1\ell_1 y\left(qt\right), & 0<t<q^{-1}T_1,\\[3mm] q \displaystyle\sum_{k=2}^m \alpha_k c_k\ell_k y\left(qt-T_1\right), & q^{-1}T_1< t<T_1, \end{array}\right. 
\\
\tilde{\ell}_j y\left(t\right) &= \left\{\begin{array}{cc} q \alpha_j c_j\ell_j y\left(qt+\left(q-1\right)T_1\right), & 0<t<q^{-1}\left(T_j-\left(q-1\right)T_1\right),\\[3mm]\displaystyle 0, & q^{-1}\left(T_j-\left(q-1\right)T_1\right)< t<T_j, \end{array}\right. \quad j=\overline{2,m}.
\end{aligned}
\label{20}
\end{equation}

Обозначим через $\mathcal{B}$ краевую задачу для функционально-дифференциальных уравнений второго порядка
\begin{equation}
\mathcal{L}_j y\left(t\right):=-\alpha_j\left(\ell_jy\right)^{\prime}\left(t\right)+\alpha_j b_j \ell_j y\left(t\right) + \widetilde{\ell_j}y\left(t\right)=0,\quad 0<t<l_j,\quad j=\overline{1,m},
\label{21}
\end{equation}
при условиях (\ref{12}), (\ref{14})--(\ref{16}) и условии типа Кирхгофа
\begin{equation}
\alpha_1 y_1^\prime\left(T_1\right)-\displaystyle\sum_{j=2}^m \alpha_jy_j^\prime\left(0\right)+\beta y_1\left(T_1\right)+\gamma y_1\left(q^{-1}T_1\right)=0,
\label{22}
\end{equation}
где $\widetilde{\ell_j}y\left(t\right)$ определены в (\ref{20}), и
\begin{equation}
l_1:=T_1, \quad l_j:=q^{-1}\left(T_j-\left(q-1\right)T_1\right), \quad j=\overline{2,m},
\label{23}
\end{equation}
\begin{equation}
\beta:=\alpha_1 b_1-\displaystyle\sum_{j=2}^m \alpha_j b_j, \quad \gamma:=\alpha_1c_1-\displaystyle\sum_{j=2}^m \alpha_j c_j.
\label{24}
\end{equation}

Имеет место следующее утверждение.

\begin{lemma}\label{L2}Если $y\in W_2^1\left(\Gamma_m\right)$ удовлетворяет условиям (\ref{12}), (\ref{15}), (\ref{16}) и (\ref{18}), то $y\in W_2^2\left(\widetilde{\Gamma}_m\right)$ и является решением краевой задачи $\mathcal{B}$. Обратно, любое решение задачи $\mathcal{B}$ подчиняется условию (\ref{18}).\end{lemma}

\begin{proof}Пусть $y\in W_2^1\left(\Gamma_m\right)$  и удовлетворяет условиям (\ref{12}), (\ref{15}), (\ref{16}) и (\ref{18}). Учитывая, что (\ref{18}) эквивалентно (\ref{19}) и применяя лемму 2 из \cite{3} к (\ref{19}) вместе с (\ref{23}), получаем, что $\ell_j y\left(t\right)\in W_2^1\left[0,l_j\right]$ и 
\begin{equation}
\alpha_1\ell_1 y\left(T_1\right)=\displaystyle\sum_{j=2}^m \alpha_j\ell_j y\left(0\right).
\label{25}
\end{equation}
Тогда, используя (\ref{11}), (\ref{13}) и (\ref{23}), получаем $y_j^\prime \left(t\right)\in W_2^1\left[0,l_j\right]$ для $j=\overline{1,m}$. Следовательно, $y\in W_2^2\left(\widetilde{\Gamma}_m\right)$. 

С учетом (\ref{11}) и (\ref{13}) перепишем (\ref{25}) в эквивалентном виде
\begin{multline}
\alpha_1\left(y_1^\prime\left(T_1\right)+b_1y_1\left(T_1\right)+c_1y_1\left(q^{-1}T_1\right)\right)=
\\
=\displaystyle\sum_{j=2}^m \alpha_j\left(y_j^\prime\left(0\right) +b_jy_j\left(0\right)+c_jy_j\left(\left(q^{-1}-1\right)T_1\right)\right),
\label{26}
\end{multline}
где, согласно (\ref{14}), пределы 
$$y_j\left(\left(q^{-1}-1\right)T_1\right):=\displaystyle\lim_{t \to \left(q^{-1}-1\right)T_1+0} y_j\left(t\right) = \displaystyle\lim_{t \to \left(q^{-1}-1\right)T_1+0} y_1\left(t+T_1\right)=y_1(q^{-1}T_1),$$ 
очевидно, существуют. Таким образом, соотношение (\ref{26}) вместе с (\ref{14}), (\ref{15}) дает (\ref{22}) с (\ref{24}). Наконец, интегрируя по частям в (\ref{19}) и используя (\ref{15}), (\ref{16}) и (\ref{21}), будем иметь 
\begin{equation}
B\left(y, w\right)=  w_1\left(T_1\right)\left(\alpha_1\ell_1 y\left(T_1\right)-\displaystyle\sum_{j=2}^m \alpha_j\ell_j y\left(0\right)\right)+ \displaystyle\sum_{j=1}^m\displaystyle\int_0^{l_j}\mathcal{L}_j y\left(t\right) w_j\left(t\right)dt=0.
\label{27}
\end{equation}
В силу (\ref{25}) и произвольности $ w_j$, из (\ref{27}) получаем (\ref{21}).

Обратно, пусть $y$ -- решение задачи $\mathcal{B}$. Так как (\ref{22}) эквивалентно  (\ref{25}), то второе равенство в  (\ref{27}) выполняется. Интегрируя по частям в  (\ref{27}) приходим к  (\ref{19}). Так как  (\ref{19}) эквивалентно  (\ref{18}), то  (\ref{18}) выполняется.
\end{proof}

Объединив леммы \ref{L1} и \ref{L2}, получаем следующий результат.

\begin{theorem}\label{Th1}Функция $y\in W_2^1\left(\Gamma_m\right)$ является решением вариационной задачи (\ref{12}), (\ref{14})--(\ref{17}) тогда и только тогда, когда $y\in W_2^2\left(\widetilde{\Gamma}_m\right)$ и является решением краевой задачи~$\mathcal{B}$.\end{theorem}

\section{Однозначная разрешимость}  

Установим однозначную разрешимость краевой задачи $\mathcal{B}$, а значит, согласно теореме \ref{Th1}, и вариационной задачи (\ref{12}), (\ref{14})--(\ref{17}).

Сначала докажем два вспомогательных утверждения.

\begin{lemma}\label{L3} Существует $C_1$ такое, что 
\begin{equation}
\|\ell w\|^2_{L_2\left(\Gamma_m\right)}\leqslant C_1 \| w\|^2_{W^1_2\left(\widetilde{\Gamma}_m\right)} \quad\forall w\in\mathcal{W}.
\label{28}
\end{equation}
\end{lemma}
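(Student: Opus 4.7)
The plan is to bound $\|\ell w\|_{L_2(\Gamma_m)}^2=\sum_{j=1}^m\int_0^{T_j}(\ell_j w(t))^2\,dt$ pointwise in $j$. I would apply the elementary inequality $(A+B+C)^2\leqslant 3(A^2+B^2+C^2)$ to the integrand $(\ell_j w(t))^2$, which reduces the estimate to three types of terms: $\int_0^{T_j}(w_j'(t))^2\,dt$, $b_j^2\int_0^{T_j}w_j(t)^2\,dt$, and the delayed term $c_j^2\int_0^{T_j}w_j(q^{-1}(t-(q-1)T_1))^2\,dt$ (with $c_1^2\int_0^{T_1}w_1(q^{-1}t)^2\,dt$ in the case $j=1$). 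The two non-delayed terms are directly bounded by $\|w_j\|_{W^1_2[0,T_j]}^2$, and by condition (\ref{16}) this equals $\|w_j\|_{W^1_2[0,l_j]}^2$ for $j\geqslant 2$, which is precisely the $j$-th component of $\|w\|_{W^1_2(\widetilde{\Gamma}_m)}^2$.

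For the delayed term, I would change variable. When $j=1$, the substitution $s=q^{-1}t$ gives $\int_0^{T_1}w_1(q^{-1}t)^2\,dt = q\int_0^{q^{-1}T_1}w_1(s)^2\,ds\leqslant q\|w_1\|_{L_2[0,T_1]}^2$. When $j\geqslant 2$, the substitution $s=q^{-1}(t-(q-1)T_1)$ transforms the delay integral into $q\int_{(q^{-1}-1)T_1}^{l_j}w_j(s)^2\,ds$, whose lower limit is \emph{negative}. At this point one must invoke the extension convention (\ref{14}), which sets $w_j(s)=w_1(s+T_1)$ for $s\in((q^{-1}-1)T_1,0)$; otherwise $\ell_j w$ would not even be defined on the full interval $[0,T_j]$. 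Splitting the integral at $s=0$ and applying the further shift $u=s+T_1$ on the negative piece, the delayed contribution is bounded by $q\|w_j\|_{L_2[0,l_j]}^2+q\int_{q^{-1}T_1}^{T_1}w_1(u)^2\,du \leqslant q\bigl(\|w_j\|_{L_2[0,l_j]}^2+\|w_1\|_{L_2[0,T_1]}^2\bigr)$.

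Summing over $j=1,\dots,m$ and collecting all contributions, I obtain $\|\ell w\|^2_{L_2(\Gamma_m)}\leqslant C_1\|w\|^2_{W^1_2(\widetilde{\Gamma}_m)}$ with $C_1$ depending only on $q$, $m$ and $\max_j(b_j^2,c_j^2)$. The only genuine care-point, rather than a real obstacle, is the handling of the negative arguments via the extension (\ref{14}) on edges $e_j$ with $j\geqslant 2$; it is precisely this that forces a contribution of $\|w_1\|_{L_2[0,T_1]}^2$ to appear on the right-hand side when estimating $j\geqslant 2$ terms, reflecting the propagation of the delay through the internal vertex $v_1$ into the edge $e_1$.
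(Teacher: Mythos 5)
Your proposal is correct and follows essentially the same route as the paper's proof: the elementary inequality $(A+B+C)^2\leqslant 3(A^2+B^2+C^2)$, the change of variables in the delayed terms with the extension convention (\ref{14}) producing the extra $\|w_1\|^2_{L_2(q^{-1}T_1,T_1)}$ contribution for $j\geqslant 2$, and condition (\ref{16}) to pass from $\Gamma_m$ to $\widetilde{\Gamma}_m$. No gaps; the constant you obtain has the same dependence as in the paper.
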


\begin{proof}
Используя (\ref{11}), (\ref{13}) и неравенство
\begin{equation}
\left(a_1+\dots+a_n\right)^2 \leqslant n\left(a_1^2+\dots+a_n^2\right),\quad  a_1,\dots,a_n\in{\mathbb R},
\label{29}
\end{equation}
для $n=3$ получаем
\begin{multline}
\|\ell w\|^2_{L_2\left(\Gamma_m\right)} \leqslant 3 \displaystyle\sum_{j=1}^m \displaystyle\int_{0}^{T_j}\left(  w_j^\prime\left(t\right) \right)^2 dt  + 3 \displaystyle\sum_{j=1}^m b_j^2 \displaystyle\int_{0}^{T_j}  w_j^2\left(t\right)  dt + 
\\
+3\left(c_1^2 \displaystyle\int_{0}^{T_j}  w_1^2\left(q^{-1}t\right)dt + \displaystyle\sum_{j=2}^m c_j^2 \displaystyle\int_{0}^{T_j}  w_j^2\left(q^{-1}\left(t-\left(q-1\right)T_1\right)\right)  dt \right).
\label{30}
\end{multline}
Применяя (\ref{14}) к $w$, имеем
\begin{equation}
\begin{aligned}
\displaystyle\int_{0}^{T_j} w_1^2\left(q^{-1}t\right)dt = q\| w_1\|^2_{L_2\left(0,q^{-1}T_1\right)},& 
\\ 
\displaystyle\int_{0}^{T_j} w_j^2\left(q^{-1}\left(t-\left(q-1\right)T_1\right)\right)dt  = q&\| w_1\|^2_{L_2\left(q^{-1}T_1,T_1\right)} + 
\\
 &+q\| w_j\|^2_{L_2\left(0,q^{-1}\left(T_j-\left(q-1\right)T_1\right)\right)}, \quad j=\overline{2,m}.
\end{aligned}\label{31}
\end{equation}
Подставляя (\ref{31}) в (\ref{30}) и применяя (\ref{16}), мы приходим к (\ref{28}), где константа $C_1$ не зависит от $ w$.
\end{proof}

\begin{lemma}\label{L4}Существует $C_2>0$ такое, что 
$$\mathcal{J}\left( w\right)\geqslant C_2\| w\|^2_{W^1_2\left(\widetilde{\Gamma}_m\right)} \quad\forall w\in\mathcal{W}.$$
\end{lemma}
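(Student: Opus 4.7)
The plan is to combine an elementary one-sided bound controlling $\|w'\|_{L_2(\widetilde{\Gamma}_m)}$ by $\mathcal{J}(w)$ and $\|w\|_{L_2(\widetilde{\Gamma}_m)}$ with a compactness-uniqueness argument that disposes of the remaining $L_2$ term.

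First I would derive a reverse-direction companion of Lemma~\ref{L3}. Solving (\ref{11}), (\ref{13}) for $w_j'$ and applying inequality (\ref{29}) with $n=3$, then integrating over $[0,T_j]$ and handling the delayed terms by the substitution (\ref{31}), together with (\ref{16}) (which ensures, for $j\ge 2$, that $\|w_j\|_{L_2[0,T_j]}=\|w_j\|_{L_2[0,l_j]}$), should yield constants $C_3,C_4>0$ such that
\begin{equation*}
\|w'\|_{L_2(\widetilde{\Gamma}_m)}^2 \le C_3\,\mathcal{J}(w) + C_4\,\|w\|_{L_2(\widetilde{\Gamma}_m)}^2 \quad \forall w\in\mathcal{W}.
\end{equation*}
Adding $\|w\|_{L_2(\widetilde{\Gamma}_m)}^2$ to both sides yields the auxiliary estimate
\begin{equation*}
\|w\|_{W_2^1(\widetilde{\Gamma}_m)}^2 \le C_3\,\mathcal{J}(w) + (1+C_4)\,\|w\|_{L_2(\widetilde{\Gamma}_m)}^2,
\end{equation*}
so the claim will follow once one rules out sequences $w^{(n)}\in\mathcal{W}$ with $\|w^{(n)}\|_{W_2^1(\widetilde{\Gamma}_m)}=1$ and $\mathcal{J}(w^{(n)})\to 0$.

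Suppose, for contradiction, that such a sequence exists. By Rellich--Kondrachov, a subsequence converges strongly in $L_2(\widetilde{\Gamma}_m)$ and weakly in $W_2^1(\widetilde{\Gamma}_m)$ to some $w^*$. The constraints defining $\mathcal{W}$, namely (\ref{12}) with $y_0=0$, (\ref{15}) and (\ref{16}), are continuous linear conditions on $W_2^1(\Gamma_m)$, so $\mathcal{W}$ is weakly closed and $w^*\in\mathcal{W}$. Each $\ell_j\colon W_2^1\to L_2$ is bounded and linear, hence $w\mapsto\|\ell_j w\|_{L_2}^2$ is weakly lower semicontinuous, giving $\mathcal{J}(w^*)\le\liminf\mathcal{J}(w^{(n)})=0$ and therefore $\ell_j w^*\equiv 0$ for every $j=\overline{1,m}$.

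The crux is then to deduce $w^*=0$ from $\ell_j w^*\equiv 0$ together with the membership $w^*\in\mathcal{W}$. For $j=1$ the Volterra integral equation displayed in the introduction, specialised to $y_0=0$ and $u_1\equiv 0$, has only the trivial solution, so $w_1^*\equiv 0$ on $[0,T_1]$. For $j\ge 2$, on $t\in[0,(q-1)T_1]$ the delayed argument $q^{-1}(t-(q-1)T_1)$ belongs to $[(q^{-1}-1)T_1,0]$, so by the extension (\ref{14}) the delayed value coincides with $w_1^*(\cdot+T_1)\equiv 0$; together with $w_j^*(0)=w_1^*(T_1)=0$ this forces $w_j^*\equiv 0$ on $[0,(q-1)T_1]$. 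Since $q>1$ implies $q^{-1}(t-(q-1)T_1)<t$ for $t>(q-1)T_1$, the equation is genuinely of Volterra type and a step-by-step iteration extends $w_j^*\equiv 0$ to all of $[0,T_j]$. With $w^*=0$, strong $L_2$-convergence combined with the auxiliary estimate yields $\|w^{(n)}\|_{W_2^1(\widetilde{\Gamma}_m)}\to 0$, contradicting the normalisation. The main obstacle is this uniqueness step; it is tractable precisely because $q>1$ makes the underlying functional-differential equations Volterra-type.
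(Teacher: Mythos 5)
Your proposal is correct and follows the same overall compactness--uniqueness scheme as the paper: a G\r{a}rding-type estimate obtained from (\ref{11}), (\ref{13}), (\ref{29}), (\ref{31}) (the paper's (\ref{33})--(\ref{34})), then a contradiction argument with a normalised sequence, identification of the limit as a solution of the homogeneous Cauchy problem (\ref{11})--(\ref{15}) with $y_0=0$, $u_j\equiv0$, and its uniqueness. The differences are in execution and are legitimate: you bypass inequality (\ref{35}) (Lemma~5 of \cite{3}) by simply adding $\|w\|^2_{L_2}$ to both sides, which makes the estimate slightly more self-contained, whereas the paper uses (\ref{35}) to write (\ref{36}) directly with the full $W^1_2(\widetilde{\Gamma}_m)$-norm on the left; and you identify the limit via weak convergence and weak lower semicontinuity of $\mathcal{J}$, whereas the paper upgrades the subsequence to a Cauchy sequence in $W^1_2(\widetilde{\Gamma}_m)$ using (\ref{36}) together with (\ref{29}) for $n=2$, and then invokes Lemma~\ref{L3} to pass to the limit in $\ell$. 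Note that the weak continuity of $w\mapsto\ell_j w$ on $\mathcal{W}$ that your lower-semicontinuity step relies on is exactly the boundedness provided by Lemma~\ref{L3}, and one should say a word about extending the weak limit by zero on $[l_j,T_j]$ (possible since $w_j(l_j)=0$ is preserved under weak convergence), so that the limit indeed lies in $\mathcal{W}\subset W^1_2(\Gamma_m)$; these are minor points. Finally, you spell out the step-by-step (Volterra) uniqueness argument for the homogeneous Cauchy problem, which the paper only cites from its Section~2; your observation that the deviation $t-q^{-1}(t-(q-1)T_1)=(1-q^{-1})(t+T_1)$ is bounded below by a positive constant is precisely what makes that step work.
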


\begin{proof}
Предположим противное. Пусть существует $w_{\left(n\right)}\in\mathcal{W}$, $n\in\mathbb{N}$, такое что
$$\mathcal{J}\left( w_{\left(n\right)}\right)\leqslant\frac{1}{n}\| w_{\left(n\right)}\|^2_{W^1_2\left(\widetilde{\Gamma}_m\right)}.$$
Без ограничения общности можно считать, что 
\begin{equation}\| w_{\left(n\right)}\|_{W^1_2\left(\widetilde{\Gamma}_m\right)}=1. \label{-10}\end{equation}
Тогда
\begin{equation}\mathcal{J}\left( w_{\left(n\right)}\right)\leqslant\frac{1}{n}, \quad n\in\mathbb{N}.\label{32}\end{equation}

Используя (\ref{11}), (\ref{13}) и неравенство (\ref{29}) для $n=3$, получаем
\begin{equation}
\begin{aligned}
&\left( w_1^\prime\left(t\right)\right)^2\leqslant 3\left(\left(\ell_1 w\left(t\right)\right)^2+b_1^2 w_1^2\left(t\right)+c_1^2 w_1^2\left(q^{-1}t\right) \right), 
\\ 
&\left( w_j^\prime\left(t\right)\right)^2\leqslant 3\left(\left(\ell_j w\left(t\right)\right)^2+b_j^2 w_j^2\left(t\right)+c_j^2 w_j^2\left(q^{-1}\left(t-\left(q-1\right)T_1\right)\right) \right), \quad j=\overline{2,m}.
\end{aligned}\label{33}
\end{equation}
Проинтегрировав (\ref{33}) от $0$ до $T_j$ и умножив на $\alpha_j$, а затем просуммировав по $j=\overline{1,m}$, приходим к оценке
\begin{multline}
\frac{1}{3}\displaystyle\sum_{j=1}^m\alpha_j\displaystyle\int_{0}^{T_j}\left(  w_j^\prime\left(t\right) \right)^2 dt\leqslant \displaystyle\sum_{j=1}^m\alpha_j\displaystyle\int_{0}^{T_j}\left( \ell_j w\left(t\right) \right)^2 dt+\displaystyle\sum_{j=1}^m\alpha_j b_j^2\displaystyle\int_{0}^{T_j} w_j^2\left(t\right)dt+
\\
+\alpha_1 c_1^2\displaystyle\int_{0}^{T_1} w_1^2\left(q^{-1}t\right)dt + \displaystyle\sum_{j=2}^m \alpha_j c_j^2\displaystyle\int_{0}^{T_j} w_j^2\left(q^{-1}\left(t-\left(q-1\right)T_1\right)\right)dt.
\label{34}
\end{multline}
Согласно лемме 5 в \cite{3} имеем  
\begin{equation}\| w^\prime\|^2_{L_2\left(\Gamma_m\right)}\geqslant C_3\| w\|^2_{W^1_2\left(\widetilde{\Gamma}_m\right)} \quad\forall w\in\mathcal{W},\label{35}\end{equation}
где $C_3>0$, $ w^\prime=\left[ w_1^\prime,\dots, w_m^\prime\right]$. Подставляя (\ref{31}) в (\ref{34}) и используя оценку (\ref{35}), будем иметь 
\begin{equation}\frac{\alpha C_3}{3}\| w\|^2_{W^1_2\left(\widetilde{\Gamma}_m\right)}\leqslant\mathcal{J}\left( w\right)+K\| w\|^2_{L_2\left(\Gamma_m\right)}, \quad\alpha:=\displaystyle\min_{j=\overline{1,m}}\alpha_j >0.\label{36}\end{equation}

Так как $W^1_2\left(\Gamma_m\right)$ компактно вложено  в $L_2\left(\Gamma_m\right)$, то в силу (\ref{-10}) существует подпоследовательность $\{ w_{\left(n_k\right)}\}_{k\in\mathbb{N}}$ фундаментальная в $L_2\left(\Gamma_m\right)$. Тогда из неравенства (\ref{36}) получаем
$$\frac{\alpha C_3}{3}\| w_{\left(n_k\right)}- w_{\left(n_l\right)}\|^2_{W^1_2\left(\widetilde{\Gamma}_m\right)}\leqslant\mathcal{J}\left( w_{\left(n_k\right)}- w_{\left(n_l\right)}\right)+K\| w_{\left(n_k\right)}- w_{\left(n_l\right)}\|^2_{L_2\left(\Gamma_m\right)}.$$
Кроме того, используя (\ref{29}) при $n=2$ и (\ref{32}), имеем
$$\mathcal{J}\left( w_{\left(n_k\right)}- w_{\left(n_l\right)}\right)\leqslant\frac{2}{n_k}+\frac{2}{n_l}.$$
Таким образом, последовательность $\{ w_{\left(n_k\right)}\}_{k\in\mathbb{N}}$ является фундаментальной в $W^1_2\left(\widetilde{\Gamma}_m\right)$ и сходится к некоторой функции $ w_{\left(0\right)}\in\mathcal{W}$.

В силу леммы \ref{L3} сходимость $ w_{\left(n_k\right)}$ к $ w_{\left(0\right)}$ в $W^1_2\left(\widetilde{\Gamma}_m\right)$ влечет сходимость $\ell w_{\left(n_k\right)}$ к $\ell w_{\left(0\right)}$ в $L_2\left(\Gamma_m\right)$. Следовательно, учитывая (\ref{32}), получаем
$$\|\ell w_{\left(0\right)}\|^2_{L_2\left(\Gamma_m\right)}=\displaystyle\lim_{k \to \infty}\|\ell w_{\left(n_k\right)}\|^2_{L_2\left(\Gamma_m\right)}\leqslant \frac{1}{\alpha}\displaystyle\lim_{k \to \infty}\mathcal{J}\left(w_{\left(n_k\right)}\right)=0,$$
то есть $\ell w_{\left(0\right)}=0$. Таким образом, $ w_{\left(0\right)}$ является решением задачи Коши (\ref{11})--(\ref{15}) при $y_0=0$ и $u_j\left(t\right)\equiv 0$, $j=\overline{1,m}$. В силу единственности решения задачи Коши имеем $\ w_{\left(0\right)}\equiv0$, что противоречит (\ref{-10}).
\end{proof}

Докажем основной результат этого раздела.

\begin{theorem}
Краевая задача $\mathcal{B}$ имеет единственное решение $y\in W^1_2\left(\Gamma_m\right) \cap W_2^2\left(\widetilde{\Gamma}_m\right)$. Кроме того, существует $C$ такое, что 
\begin{equation}\|y\|_{W^1_2\left(\Gamma_m\right)}\leqslant C|y_0|.\label{37}\end{equation}
\end{theorem}

\begin{proof}
Введем функцию $\Phi=\left[\Phi_1,\dots,\Phi_m\right]\in W^1_2\left(\Gamma_m\right)$ такую, что 
\[
\Phi_1\left(t\right) = \begin{cases} 
y_0\left(1-\frac{qt}{T_1}\right), & 0 \leqslant t < q^{-1}T_1, \\
0, & q^{-1}T_1 \leqslant t \leqslant T_1,
\end{cases}
\quad \Phi_j\left(t\right) \equiv 0, \quad j = 2, \dots, m.
\]
В силу леммы \ref{L2}, для того чтобы функция $y\in W^1_2\left(\Gamma_m\right)$, удовлетворяющая условиям (\ref{12}), (\ref{15}), (\ref{16}), была решением краевой задачи $\mathcal{B}$, необходимо и достаточно, чтобы выполнялось условие (\ref{18}). Другими словами, $y$ является решением краевой задачи $\mathcal{B}$ тогда и только тогда, когда $x:=y-\Phi\in\mathcal{W}$ (что эквивалентно условиям (\ref{12}), (\ref{15}), (\ref{16})) и
\begin{equation}B\left(\Phi, w\right)+B\left(x, w\right)=0 \quad\forall w\in\mathcal{W}\label{38}\end{equation}
(что, в свою очередь, эквивалентно условию (\ref{18})).

Покажем, что существует единственная функция $x$ такая, что выполняется (\ref{38}). Так как  $B\left( w, w\right)=\mathcal{J}\left( w\right)$, то в силу лемм \ref{L3} и \ref{L4} билинейная форма $B\left( \cdot,\cdot \right)$ задает на $\mathcal{W}$ эквивалентное скалярное произведение. Кроме того, используя (\ref{13}), (\ref{14}), (\ref{18}) и неравенство Коши--Буняковского, получаем оценку
\begin{equation}\left|B\left(\Phi, w\right)\right|= \alpha_1\left|\displaystyle\int_{0}^{T_1}\ell_1\Phi\left(t\right)\ell_1 w\left(t\right) dt \right| \leqslant M|y_0| \| w\|_{\mathcal{W}},\label{39}\end{equation}
где $\| w\|_{\mathcal{W}}=\sqrt{\left( w, w\right)_{\mathcal{W}}}$. Таким образом, по теореме Рисса об общем виде линейного непрерывного функционала в гильбертовом пространстве существует единственная функция $x\in\mathcal{W}$ такая, что выполняется (\ref{38}). Следовательно краевая задача $\mathcal{B}$ имеет единственное решение $y=\Phi+x$.

Наконец, из  (\ref{38}) и (\ref{39}) вытекает
\begin{equation}\|x\|_{\mathcal{W}}\leqslant M|y_0|.\label{40}\end{equation}
Тогда, используя (\ref{40}) и выражение 
$$\|\Phi\|^2_{W^1_2\left(\Gamma_m\right)} = \|\Phi_1\|^2_{W^1_2\left[0,q^{-1}T_1\right]} = \frac{T_1^2+3q^2}{3qT_1}y_0^2,$$
получаем оценку (\ref{37}).

\end{proof}


\bigskip

\end{document}